\definecolor{ashgrey}{rgb}{0.7, 0.75, 0.81}
\newtheorem{theorem}{Theorem}
\newtheorem{definition}[theorem]{Definition}
\newtheorem{lemma}[theorem]{Lemma}
\newtheorem{proposition}[theorem]{Proposition}
\newtheorem{remark}[theorem]{Remark}
\def \func#1{\mathop{\rm #1}\nolimits}
\begin{document}   
\title{A specific model of Hilbert geometry on the unit disc}
\author{Charalampos Charitos, Ioannis Papadoperakis \and and Georgios
Tsapogas\footnote{Corresponding author} \\
%EndAName
Agricultural University of Athens }
\maketitle
\begin{abstract} 
	A new metric on the open 2-dimensional unit disk is defined making it a  geodesically complete metric space whose geodesic lines are precisely the Euclidean straight lines. Moreover, it is shown that the unit disk with this new metric is not isometric to any hyperbolic model of constant negative curvature, nor to any convex domain in $\mathbb{R}^2$ equipped with its Hilbert metric.\\
	\textit{{2020 Mathematics Subject Classification:} 51F99}
\end{abstract}

\section{Introduction}

On the 8th August 1900, at the Second International Congress of Mathematics
held in Paris, David Hilbert delivered a lecture entitled \textquotedblleft
The future problems of mathematics\textquotedblright , in which he presented
a collection of open problems. The fourth problem of the list can be stated
as follows: If $\Omega $ is a convex subset of a Euclidean space, find a
characterization of all metrics on $\Omega $ for which the Euclidean lines
are geodesics. We can put additional conditions on these metrics on $\Omega $
by requiring geodesic completeness and Euclidean lines being the
unique geodesics. These geometries with the extra requirements are
 of particular interest and they have been studied extensively.

Before Hilbert, Beltrami in \cite{Beltrami} had already shown that the
unit disc in the plane, with the Euclidean chords taken as geodesics of
infinite length, is a model of the hyperbolic geometry. However, Beltrami did
not give a formula for this distance, and this led Klein in \cite{Klein}
to express the distance in the unit disc in terms of the cross radio. 
Hilbert's fourth problem became a very active research
area and it was gradually realized that the discovery of all metrics satisfying Hilbert's problem was not plausible.
Consequently, each metric resolving Hilbert's problem defines a new
geometry worth to be studied. A very important class of such metrics,
defined by means of the cross ratio, are referred to as \textit{Hilbert
metrics} and play a central role in this research area.

Among the prominent mathematicians worked on the Hilbert's fourth problem, it
is worthy to mention Busemann and Pogorelov, see for instance \cite{Buseman}, 
\cite{Pogorelov1}, \cite{Pogorelov2}. The ideas of the latter to solve
Hilbert's fourth problem came from Busemann, who introduced 
integral geometry techniques to approach Hilbert's problem. Busemann's idea was to
consider for every two points $x$ and $y$ in a convex subset $\Omega $ of
the real projective space $RP^{n},$ the unique geodesic segment $[x,y]$
joining these points, and the subset of hyperplanes of $RP^{n}$ intersecting 
$[x,y]$ equipped with a  non negative measure having specific properties. In
dimension $2,$ Pogorelov's solution consisted in showing that every distance
between $x$ and $y$ satisfying Hilbert's problem is given by a metric $d(x,y)
$ constructed with the help of the measure constructed by Busemann on the
subset of hyperplanes mentioned above. There are generalizations of 
Pogorelov's theorem in greater dimensions and one may see in \cite{Papadopoulos}  a detailed discussion on Hilbert's fourth problem. 

However, Pogorelov's approach is very general and does not allow for further study of the geometry of these metrics. On the contrary, in the
present work a concrete new metric satisfying Hilbert's problem is defined without the use of cross ratio and its geometry is studied. More precisely, it is shown that this metric makes the open unit disk a geodesically complete metric space whose geodesics are of infinite length and are precisely the Euclidean lines. Moreover it is shown that this metric space is not isometric to any hyperbolic model of constant curvature nor to any convex domain in the plane equipped with its Hilbert metric. Finally, the natural Euclidean boundary of the unit disk is shown to coincide with the visual boundary with respect to the new metric, namely, with the set of equivalence classes of asymptotic geodesic rays.

\section{Definitions}

Let $f:\left( -1,1\right) \longrightarrow \mathbb{R}$ be the function

\begin{minipage}{9cm}\[f(t)= \frac{t}{1-|t|}\]\end{minipage}
\begin{minipage}{4.3cm}
	\includegraphics[scale=0.55]{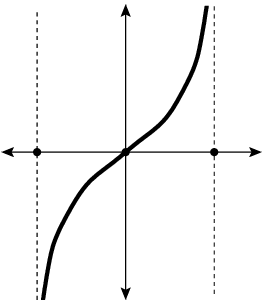}
	\begin{picture}(0,0)
		\put(-15,32){$\scriptstyle 1$}  
		\put(-39,32){$\scriptstyle 0$} 
		\put(-75,32){$\scriptstyle -1$}
	\end{picture}
\end{minipage}\\
%\hspace*{1cm}
and define a metric $d_{I}$ on the interval $\left( -1,1\right) $ by
\begin{equation*}
 d_{I}\left( s,t\right) = \left \vert f\left( s\right) -f\left( t\right) \right \vert .
\end{equation*}
Clearly, %for any $t\in\left( -1,1\right)$
\begin{equation*}d_{I}\left( t,0\right)=d_{I}\left( 0,t\right) =\left \vert
\frac{t}{1-|t|}\right \vert=  \frac{1}{1-|t|}-1 ,\mathrm{\ \ }\forall t\in(-1,1)\end{equation*}
and 
\begin{equation*}
d_{I}\left( s,t\right) = f\left( |s|\right) +f\left( |t|\right)\mathrm{\ if\ } st<0.
\end{equation*}
Moreover, this metric has the following two properties:

\begin{enumerate}
\item[P1:] For $s,t\in(-1,1),$ $d_{I}\left( s,t\right) \longrightarrow \infty$ as $ t\longrightarrow -1$ or $1.$

\item[P2:] For $ q,s,t\in(-1,1)$ with $q<s<t$ we have $d_{I}\left( q,s\right)+d_{I}\left( s,t\right) =d_{I}\left( q,t\right). $
\end{enumerate}

We now define a metric $D$ on the open 2-dimensional unit disk $\mathbb{D}^{2}$ as
follows: consider the unit disk in the $xy-$plane and identify each ray with
an angle $\theta \in \left[ 0,2\pi \right] .$ For each such ray (eg. angle $%
\theta $) denote by $\Delta _{\theta }$ the diameter determined by that ray.
Observe that the Euclidean length of $\Delta _{\theta }$ is $2.$ For any
point $z$ in the disk, denote by $z_{\theta }$ its projection to the
diameter $\Delta _{\theta }.$ Let $x,y$ be two points in the disk.

For each $\theta \in \left[ 0,2\pi \right] ,$ set $d_{\theta}\left(
x,y\right) :=d_{I}\left( x_{\theta},y_{\theta}\right) $ where
the projection points
$x_{\theta},y_{\theta}$
in $\Delta \theta$
are identified with the corresponding points in the interval $(-1,1). $ Define
\begin{equation}
D\left( x,y\right) := \frac{1}{2} \int_{0}^{2\pi}d_{\theta}\left( x,y\right)
d\theta .\label{zerotwopi}
\end{equation}
Since, for every $\theta \in [0,\pi]$ the diameters $\Delta_{\theta}$ and $%
\Delta_{\theta +\pi}$ coincide, we have $d_{\theta}\left( x,y\right)= d_{\pi
+\theta}\left( x,y\right)$ for any two points $x,y.$ It follows that 
\begin{equation}
D\left( x,y\right) =\frac{1}{2}\int_{0}^{2\pi}d_{\theta}\left( x,y\right)
d\theta = \int_{0}^{\pi}d_{\theta}\left( x,y\right) d\theta
\label{zeropi}
\end{equation}

\begin{lemma}[Triangle Inequality]
\label{unique} Let $x,y,z$ be three points in the disk. \newline
(a) If the Euclidean segment $[x,z]$ in the unit disk contains the point $y$
then 
\begin{equation*}
D(x,z)= D(x,y)+D(y,z).
\end{equation*}
(b) If $y$ is a point not contained in $[x,z]$ then 
\begin{equation*}
D(x,z)< D(x,y)+D(y,z).
\end{equation*}
\end{lemma}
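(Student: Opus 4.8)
My plan is to treat both parts fibrewise in the angular variable $\theta$, reducing each assertion to a statement about the one-dimensional metric $d_{I}$ on $(-1,1)$ and then integrating. Two facts will be used throughout. First, since $x,y,z$ lie in the \emph{open} disk and orthogonal projection onto a line through the origin is $1$-Lipschitz, we have $|x_{\theta}|\le|x|<1$ for all $\theta$ (and likewise for $y$ and $z$); hence $x_{\theta},y_{\theta},z_{\theta}$ stay in a fixed compact subinterval of $(-1,1)$, the maps $\theta\mapsto d_{\theta}(x,y)$, $\theta\mapsto d_{\theta}(y,z)$, $\theta\mapsto d_{\theta}(x,z)$ are continuous and bounded on $[0,\pi]$, and in particular $D(x,y),D(y,z),D(x,z)$ are finite. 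Second, $f$ is a strictly increasing continuous bijection of $(-1,1)$ onto $\mathbb{R}$ (its derivative equals $(1-|t|)^{-2}>0$), so $d_{I}$ is exactly the pull-back of the usual metric of $\mathbb{R}$ under $f$; consequently, for $a,b,c\in(-1,1)$ the identity $d_{I}(a,b)+d_{I}(b,c)=d_{I}(a,c)$ holds if and only if $b$ lies (weakly) between $a$ and $c$, and the strict inequality $d_{I}(a,b)+d_{I}(b,c)>d_{I}(a,c)$ holds otherwise (the familiar betweenness criterion for equality in the triangle inequality on $\mathbb{R}$, which refines property P2).

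\textit{The non-strict inequality, and part (a).} For each fixed $\theta$ the function $d_{\theta}$ is the pull-back of $d_{I}$ under the orthogonal projection $w\mapsto w_{\theta}$ onto $\Delta_{\theta}$, hence a pseudo-metric on the disk; thus $d_{\theta}(x,z)\le d_{\theta}(x,y)+d_{\theta}(y,z)$, and integrating over $[0,\pi]$ via \eqref{zeropi} gives $D(x,z)\le D(x,y)+D(y,z)$ in every case. For (a), write $y=(1-s)x+sz$ with $s\in[0,1]$; orthogonal projection onto $\Delta_{\theta}$ is affine, so $y_{\theta}=(1-s)x_{\theta}+sz_{\theta}$, meaning $y_{\theta}$ lies between $x_{\theta}$ and $z_{\theta}$ for \emph{every} $\theta$. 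By the preliminary remark, $d_{\theta}(x,z)=d_{\theta}(x,y)+d_{\theta}(y,z)$ for every $\theta$, and the claimed equality follows by integration.

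\textit{Part (b).} Assume $y\notin[x,z]$. The gap $g(\theta):=d_{\theta}(x,y)+d_{\theta}(y,z)-d_{\theta}(x,z)$ is continuous and, by the previous step, nonnegative on $[0,\pi]$; hence it suffices to exhibit a non-empty open set of angles on which $g>0$, for then $D(x,y)+D(y,z)-D(x,z)=\int_{0}^{\pi}g(\theta)\,d\theta>0$. By the preliminary remark, $g(\theta)>0$ exactly when $y_{\theta}$ fails to lie between $x_{\theta}$ and $z_{\theta}$, equivalently when the line through $y$ perpendicular to $u_{\theta}=(\cos\theta,\sin\theta)$ misses the segment $[x,z]$. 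If $x=z$, then $[x,z]=\{x\}$ and $y\ne x$, so $y_{\theta}\ne x_{\theta}$ for every $\theta$ outside the single direction (modulo $\pi$) orthogonal to $y-x$, whence $g=2d_{\theta}(x,y)>0$ on an open set. If $x\ne z$, choose Euclidean coordinates so that the line through $x$ and $z$ is the first axis, with $x=(a,0)$, $z=(c,0)$, $a<c$, and $y=(p,q)$; then $x_{\theta}=a\cos\theta$, $z_{\theta}=c\cos\theta$, $y_{\theta}=p\cos\theta+q\sin\theta$, and for $\cos\theta\ne0$ one checks that $y_{\theta}$ lies between $x_{\theta}$ and $z_{\theta}$ if and only if $a\le p+q\tan\theta\le c$. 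When $q=0$ this fails for all such $\theta$ because $p\notin[a,c]$; when $q\ne0$ it confines $\tan\theta$, hence $\theta$, to a bounded set, so it fails on a non-empty open set. Either way $g>0$ on a non-empty open subset of $[0,\pi]$, and the strict inequality follows.

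\textit{Where the difficulty lies.} The first two steps are essentially formal. The content of the lemma is part (b), where one must (i) verify that ``$y\notin[x,z]$'' forces $y_{\theta}$ out of $[x_{\theta},z_{\theta}]$ for a set of directions of positive measure --- done above by the explicit computation, or conceptually by observing that the directions of lines through $y$ meeting $[x,z]$ form a sub-arc of the circle of directions that is proper precisely when $y\notin[x,z]$ --- and (ii) pass from ``strict on an open set'' to ``strict after integrating'', which uses only continuity and nonnegativity of $g$. I expect the bookkeeping of the degenerate configurations ($x=z$; $y$ collinear with but outside $[x,z]$; $\cos\theta=0$) to be the only point requiring care.
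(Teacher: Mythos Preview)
Your proof is correct and follows essentially the same approach as the paper: reduce to the one-dimensional metric $d_I$ by projecting onto each diameter $\Delta_\theta$, use P2 (or rather its equivalence with betweenness) fibrewise, and for part (b) identify a set of directions of positive measure on which $y_\theta\notin[x_\theta,z_\theta]$. The only cosmetic difference is that the paper locates this set geometrically as the interval $[\theta_{yx},\theta_{yz}]$ between the perpendiculars to $[y,x]$ and $[y,z]$, whereas you find it by an explicit coordinate computation; your version is in fact more careful about degeneracies and about passing from pointwise strictness to strictness of the integral.
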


\begin{proof}
(a) The projection $y_{\theta} $ will be in the interior of the segment $\left[x_{\theta},z_{\theta}\right]\subset \Delta_{\theta}$ for all directions $\theta$ except in the case the direction  $\theta$ is perpendicular to the segment $[x,z]$ (in which case $x_{\theta}\equiv z_{\theta}$). By Property P2 
\[ d_{\theta}\left(  x,y\right)+d_{\theta}\left(  y,z\right)  =d_{\theta}\left(  x,z\right)\]
which shows the desired equality. 

\mbox{$\,$}\\[2mm] \raisebox{1.7cm}{(b)}  \begin{minipage}{8.3cm}Set $\theta_{yx}$ (resp. $\theta_{yz}$) to be the angle which, viewed as a ray, is perpendicular to the Euclidean segment $[y,x]$ (resp. $[y,z]$). We may assume that  $\theta_{yx}<\theta_{yz}.$ Then for every $\theta \notin [\theta_{yx}, \theta_{yz}]$ the point $y_{\theta}$ is contained in the segment $[x_{\theta} ,z_{\theta}]$  so that by Property P2 we have
\[d_{\theta} (x,z)= d_{\theta} (x,y) + d_{\theta} (y,z).\]
\end{minipage}\hfill
  \begin{minipage}{4.3cm}
\includegraphics[scale=0.85]{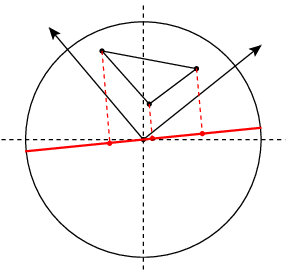}
  \begin{picture}(0,0)   
  \put(-13,90){$\scriptstyle \theta{yx}$}    
  \put(-113,100){$\scriptstyle \theta{yz}$}  
  \put(-12,58){$\color{red}\scriptstyle \Delta_{\theta}$}  
  \put(-42,45){$\color{red} z_{\theta}$} 
  \put(-82,43){$\color{red} x_{\theta}$}   
  \put(-62,44){$\color{red} y_{\theta}$} 
  \put(-42,83){$ z$} 
  \put(-82,92){$ x$} 
  \put(-62,74){$\scriptstyle y$}
 \end{picture}
  \end{minipage}\\[3mm]
  For $\theta \in [\theta_{yx}, \theta_{yz}]$ the point $y_{\theta}$ is not contained in the segment $[x_{\theta} ,z_{\theta}]$ which implies that 
  \[d_{\theta} (y,z)= d_{\theta} (y,x) + d_{\theta} (x,z)
  \textrm{\ \ or,\ \ }
  d_{\theta} (y,x)= d_{\theta} (y,z) + d_{\theta} (z,x)
  \]
  depending on whether $x_{\theta}\in [y_{\theta} ,z_{\theta}]$ or, 
  $z_{\theta}\in [y_{\theta} ,x_{\theta}].$ In the first case we  have 
  \[ d_{\theta} (x,z) <  d_{\theta} (y,z) \Longrightarrow 
  d_{\theta} (x,z) \lneqq d_{\theta} (y,z) + d_{\theta} (x,y)\]
  and the same strict inequality follows in the second case. This completes the proof of part (b).
\end{proof}

\begin{lemma}
\label{metric} The function $D\left( x,y\right)
=\int_{0}^{\pi}d_{\theta}\left( x,y\right) d\theta $ defines a metric on the
open unit disk.
\end{lemma}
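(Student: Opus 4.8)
The plan is to verify the three metric axioms — finiteness and well-definedness of the integral, symmetry, and positive-definiteness — and then to obtain the triangle inequality directly from Lemma~\ref{unique}.

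First I would check that, for fixed $x,y$ in the open disk, the integrand $\theta\mapsto d_\theta(x,y)$ is continuous and bounded on $[0,\pi]$, so that $D(x,y)$ is a finite nonnegative number. Writing a point $z$ of the disk in polar coordinates as $z=(\rho\cos\phi,\rho\sin\phi)$, its projection to $\Delta_\theta$ has signed coordinate $z_\theta=\rho\cos(\phi-\theta)$, which depends continuously (indeed smoothly) on $\theta$; since $x,y$ lie in the \emph{open} disk we have $|x_\theta|\le|x|<1$ and $|y_\theta|\le|y|<1$ for all $\theta$. As $f(t)=t/(1-|t|)$ is continuous on $(-1,1)$, the map $\theta\mapsto d_\theta(x,y)=|f(x_\theta)-f(y_\theta)|$ is continuous on the compact interval $[0,\pi]$, hence bounded and integrable, and $0\le D(x,y)<\infty$.

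Symmetry is immediate: $d_\theta(x,y)=d_I(x_\theta,y_\theta)=d_I(y_\theta,x_\theta)=d_\theta(y,x)$ for every $\theta$, so $D(x,y)=D(y,x)$. For positive-definiteness, if $x=y$ then $x_\theta=y_\theta$ and $d_\theta(x,y)=0$ for all $\theta$, so $D(x,y)=0$. Conversely, suppose $x\ne y$ and let $\theta_0\in[0,\pi)$ be the unique direction perpendicular to the segment $[x,y]$; for every $\theta\in[0,\pi]$ with $\theta\ne\theta_0$ the projections $x_\theta$ and $y_\theta$ are distinct, hence $d_\theta(x,y)=d_I(x_\theta,y_\theta)>0$. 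By the continuity just established, $d_\theta(x,y)$ is positive on a nonempty open subinterval of $[0,\pi]$, so $D(x,y)=\int_0^\pi d_\theta(x,y)\,d\theta>0$.

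Finally, the triangle inequality is exactly Lemma~\ref{unique}: given three points $x,y,z$, either $y\in[x,z]$ and $D(x,z)=D(x,y)+D(y,z)$ by part (a), or $y\notin[x,z]$ and $D(x,z)<D(x,y)+D(y,z)$ by part (b); in both cases $D(x,z)\le D(x,y)+D(y,z)$. Hence $D$ satisfies all the metric axioms. I do not expect any serious obstacle here; the only step needing a modicum of care is the well-definedness of the integral, namely continuity and boundedness of $\theta\mapsto d_\theta(x,y)$ on $[0,\pi]$, which is precisely where the assumption that $x$ and $y$ lie in the open disk enters.
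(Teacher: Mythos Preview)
Your proof is correct and follows essentially the same strategy as the paper: the triangle inequality is inherited from Lemma~\ref{unique}, and the only point requiring work is finiteness of the integral. The paper handles finiteness slightly differently --- it reduces via Lemma~\ref{unique} to bounding $D(O,x)$ and then uses the explicit estimate $d_I(0,x_\theta)\le d_I(0,\|Ox\|)$ to get $D(O,x)\le \frac{\|Ox\|}{1-\|Ox\|}\pi$ --- whereas you argue directly that $\theta\mapsto d_\theta(x,y)$ is continuous on the compact interval $[0,\pi]$; both arguments are equally valid, and yours has the minor advantage of also making positive-definiteness transparent.
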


\begin{proof}
To complete the proof that $D$ is a metric we only need to show that the integral 
\[ \int_{0}^{\pi}d_{\theta}\left(  x,y\right)  d\theta \] is finite. By Lemma \ref{unique}, it suffices to show that for any $x$ in the unit disk the integral 
$\int_{0}^{\pi}d_{\theta}\left(  O, x\right)  d\theta $ is finite.
%%%%%%%%%%%%%%%%%%%%%%%%%%%%%%%%%%%%%%%%%%%%%%%%%%%%%%%%%%%% 
  %%%%%%%%%%%%%%%%%%%%%%%%%%%%%%%%%%%%%%%%%%%%%%%%%%%%%%%
Recall that the point $O$ in the diameter $\Delta_{\theta}$ is identified with  $0\in (-1,1)$ and observe that by definition of the function $f$ the orientation of the diameter $\Delta_{\theta}$ is irrelevant. Let $\| \cdot \|$ denote Euclidean length. Clearly, 
\begin{equation*}
 d_I (0,x_{\theta})\leq d_I (0,\| Ox \|)
\end{equation*}
We then have 
\[ D(O,x) = \int_{0}^{\pi} d_{\theta} (O,x)d\theta =
\int_{0}^{\pi} d_I (0,x_{\theta})d\theta \leq
\int_{0}^{\pi} d_I (0,\| Ox \|)d\theta  = \frac{\| Ox \|}{1-\| Ox \|}\pi .\]
\end{proof}
It is well known that a curve with endpoints $x,z$ is a geodesic segment with respect to 
a metric $d$ if and only if for every $y$ in the curve we have $%
d(x,y)+d(y,z)=d(x,z).$ It follows, by part (a) of Lemma \ref{unique} above,
that Euclidean lines in the unit disk are geodesics with respect to the
metric $D$ and part (b) shows that only the Euclidean lines are geodesics
with respect to  $D.$ Hence, we have the following

\begin{proposition}\label{basic}
 The metric space $\left( \mathbb{D}^{2},D\right) $ is a geodesic metric space whose geodesics are precisely the Euclidean lines in $\mathbb{D}^{2}.$
\end{proposition}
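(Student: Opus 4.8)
The plan is to deduce both halves of the statement from Lemmas~\ref{unique} and~\ref{metric}, the only new ingredient being a short continuity estimate of the same kind used to prove Lemma~\ref{metric}. By Lemma~\ref{metric}, $D$ is a metric, so it remains to exhibit, for each pair $x\ne z$, a geodesic segment joining them, and to show that every geodesic of $(\mathbb{D}^2,D)$ is a (reparametrised) subarc of a Euclidean chord, the maximal ones being the full open chords. Throughout I use the criterion recalled just before the proposition: a curve from $x$ to $z$ is a geodesic segment for $D$ iff $D(x,y)+D(y,z)=D(x,z)$ for every point $y$ on it.

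First I would show that the Euclidean chord $[x,z]$, with the restriction of $D$, is isometric to a real interval. Define $\Phi\colon[x,z]\to[0,\infty)$ by $\Phi(w)=D(x,w)$. If $w,w'\in[x,z]$ with $w$ between $x$ and $w'$, part~(a) of Lemma~\ref{unique} gives $\Phi(w')=\Phi(w)+D(w,w')$; hence $D(w,w')=|\Phi(w)-\Phi(w')|$ for all $w,w'\in[x,z]$, so $\Phi$ is an isometry onto its image and is strictly increasing along the chord (strictly, since $D$ is a genuine metric). Moreover $\Phi$ is continuous: $[x,z]$ is a compact subset of the open disk, hence has a compact neighbourhood on which all projections to all diameters stay in a fixed $[-r,r]$ with $r<1$, where $f$ is Lipschitz with some constant $L$; therefore $d_\theta(w,w')\le L\,\|ww'\|$ uniformly in $\theta$ and $D(w,w')\le\pi L\,\|ww'\|$, exactly as at the end of the proof of Lemma~\ref{metric}. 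Thus $\Phi$ is a homeomorphism of $[x,z]$ onto $[0,D(x,z)]$, and its inverse is an isometric embedding $[0,D(x,z)]\to(\mathbb{D}^2,D)$ with image $[x,z]$. So $(\mathbb{D}^2,D)$ is a geodesic metric space and every subarc of a Euclidean chord, suitably parametrised, is a geodesic segment.

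For the converse, let $\sigma$ be any geodesic segment with endpoints $x$ and $z$, so $D(x,y)+D(y,z)=D(x,z)$ for every point $y$ of $\sigma$. By part~(b) of Lemma~\ref{unique} this forces every such $y$ to lie on $[x,z]$; since $\sigma$ is a connected curve joining $x$ and $z$ inside the segment $[x,z]$, its image is all of $[x,z]$, and by the strict monotonicity of $\Phi$ the parametrisation is necessarily that by $D$-arclength. Hence the geodesic segments are exactly the Euclidean chord subarcs, and the one joining $x$ to $z$ is unique. Finally, parametrising a full open chord $C$ by $D$-arclength from a base point $p_0$ gives, by the above, a global isometry of an interval onto $C$; Property~P1 shows this interval is all of $\mathbb{R}$. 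Indeed, as $w\in C$ approaches an endpoint $b=(\cos\beta,\sin\beta)$ of $C$, its projection $w_\beta$ onto $\Delta_\beta$ tends to $1$, and a short computation shows $d_\theta(p_0,w)\ge c\,((\theta-\beta)^2+\varepsilon)^{-1}$ on a fixed arc of directions about $\beta$, where $\varepsilon$ is the Euclidean distance from $w$ to $b$ and $c>0$ is a constant; since $\int_{-\delta}^{\delta}d\phi/(\phi^2+\varepsilon)\to\infty$ as $\varepsilon\to0$, we get $D(p_0,w)\to\infty$. Thus $C$ is a complete geodesic line; conversely any geodesic line has image inside a single chord (by the segment case, since two distinct chords meet in at most one point), hence equals a chord, and $(\mathbb{D}^2,D)$ is geodesically complete.

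I expect the only non-formal points to be the two estimates just flagged: the uniform Lipschitz bound giving continuity of $\Phi$, which is essentially a transcription of the computation in Lemma~\ref{metric}, and the computation turning the single-direction divergence coming from Property~P1 into divergence of the angular integral $D(p_0,w)$. Everything else is a direct consequence of the additivity in Lemma~\ref{unique}(a) and the strict inequality in Lemma~\ref{unique}(b).
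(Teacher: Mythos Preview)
Your argument is correct and follows the same approach as the paper, which derives both directions directly from parts~(a) and~(b) of Lemma~\ref{unique} via the betweenness criterion stated just before the proposition; the paper's entire proof is in fact just those two sentences, and you have merely supplied the details (the explicit arclength map $\Phi$ and its continuity) that the paper leaves implicit. One small remark: your Lipschitz estimate for continuity is not literally ``as at the end of the proof of Lemma~\ref{metric}''---that proof bounds $D(O,x)$ via monotonicity of $d_I$ rather than a Lipschitz constant for $f$---but your argument stands on its own. Your final paragraph on the infinite $D$-length of full chords goes beyond what Proposition~\ref{basic} asserts; the paper establishes this separately (and after the proposition) in Lemmas~\ref{rayINF} and~\ref{lineINF}, by an explicit computation of $\int_0^\pi d_\theta(O,\xi)\,d\theta$ for boundary $\xi$ rather than the two-variable lower bound you sketch.
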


The following properties follow from the definition of the metric $D.$ %see (\ref{zerotwopi}):
\begin{proposition}\label{rotary}
(a)  Every Euclidean rotation $R_{\phi} : \mathbb{D}^{2} \longrightarrow \mathbb{D}^{2}, \phi\in [0,2\pi] ,$ centered at the origin is an isometry of the metric space  $\left( \mathbb{D}^{2},D\right). $\\
 (b) Every Euclidean reflection $Q_{\phi} : \mathbb{D}^{2} \longrightarrow \mathbb{D}^{2}$ with respect to a line forming an angle $\phi\in[0,2\pi]$ with the $x-$axis is an  isometry of the metric space  $\left( \mathbb{D}^{2},D\right). $
\end{proposition}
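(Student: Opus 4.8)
The plan is to reduce both parts to a single principle: each of these Euclidean maps permutes the family of diameters $\{\Delta_\theta\}$ and, restricted to each diameter, preserves the identification with $(-1,1)$ up to orientation; hence its only effect on the integrand $d_\theta(\cdot,\cdot)$ appearing in \eqref{zeropi} is a reparametrization of the angular variable $\theta$, which the integral absorbs because $\theta\mapsto d_\theta(x,y)$ is $\pi$-periodic (this periodicity is exactly what is used to pass from \eqref{zerotwopi} to \eqref{zeropi}).

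For part (a), I would first record that a rotation $R_\phi$ about $O$ is a Euclidean isometry fixing $O$ and carrying $\Delta_\alpha$ onto $\Delta_{\alpha+\phi}$. Since orthogonal projection onto a line commutes with Euclidean isometries, the projection of $R_\phi x$ onto $\Delta_{\alpha+\phi}$ equals $R_\phi(x_\alpha)$, where $x_\alpha$ is the projection of $x$ onto $\Delta_\alpha$; moreover $R_\phi|_{\Delta_\alpha}\colon\Delta_\alpha\to\Delta_{\alpha+\phi}$ is a distance-preserving bijection fixing $O$, so it respects the identification with $(-1,1)$ up to orientation. As the function $f$, and hence $d_I$, is insensitive to that orientation, it follows that $d_{\alpha+\phi}(R_\phi x,R_\phi y)=d_\alpha(x,y)$ for every $\alpha$, i.e. $d_\theta(R_\phi x,R_\phi y)=d_{\theta-\phi}(x,y)$. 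Substituting $\alpha=\theta-\phi$ in $D(R_\phi x,R_\phi y)=\int_0^\pi d_\theta(R_\phi x,R_\phi y)\,d\theta$ rewrites it as $\int_{-\phi}^{\pi-\phi}d_\alpha(x,y)\,d\alpha$, and since $\theta\mapsto d_\theta(x,y)$ is $\pi$-periodic the integral over any interval of length $\pi$ equals $\int_0^\pi d_\alpha(x,y)\,d\alpha=D(x,y)$.

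Part (b) is formally identical with the reflection $Q_\phi$ in place of $R_\phi$: $Q_\phi$ is a Euclidean isometry fixing $O$ that sends $\Delta_\alpha$ to $\Delta_{2\phi-\alpha}$, so the same projection-commutation argument gives $d_\theta(Q_\phi x,Q_\phi y)=d_{2\phi-\theta}(x,y)$; the change of variable $\beta=2\phi-\theta$ then turns $D(Q_\phi x,Q_\phi y)$ into $\int_{2\phi-\pi}^{2\phi}d_\beta(x,y)\,d\beta$, which again equals $D(x,y)$ by $\pi$-periodicity.

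I do not expect a genuine obstacle here. The only point needing a little care is the assertion that these maps respect, on each diameter, the identification with $(-1,1)$: this is precisely the remark used in the proof of Lemma \ref{metric} that the orientation of $\Delta_\theta$ is irrelevant, valid because $f(t)=t/(1-|t|)$ is odd and enters $d_I$ only through $|f(s)-f(t)|$. Once this is granted, everything comes down to the elementary fact that a $\pi$-periodic integrable function has the same integral over every interval of length $\pi$.
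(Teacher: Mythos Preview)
Your proposal is correct and follows essentially the same approach as the paper: both arguments rest on the identity $d_{\theta}(Tx,Ty)=d_{\sigma(\theta)}(x,y)$ for $T$ a rotation or reflection and an appropriate affine reparametrization $\sigma$ of $\theta$, after which periodicity of $\theta\mapsto d_\theta(x,y)$ makes the integral invariant. The only organizational difference is that in part (b) the paper first uses part (a) to reduce to the reflection across the $x$-axis, whereas you treat a general $Q_\phi$ directly; your version is also more explicit about why the projection identity holds, invoking the oddness of $f$ and the orientation-independence of $d_I$ that the paper mentions elsewhere.
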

\begin{proof}
 As $\int_{0}^{2\pi}d_{\theta}\left( x,y\right)d\theta =  \int_{\phi}^{2\pi +\phi}d_{\theta}\left( x,y\right)d\theta$ part (a) follows.

 For (b), it suffices, by (a), to show that the Euclidean reflection $R_0$ with respect to the $x-$ axis is an isometry. Clearly, for arbitrary $x,y\in \mathbb{D}^{2}$ and for every $\theta\in [0,2\pi]$ we have
 \[ d_{\theta} \left( x,y  \right) = d_{2\pi - \theta} \left( R_0(x),R_0(y)  \right)  \]
 which implies that $ D(x,y)=D\left( R_0(x),R_0(y) \right).$
\end{proof}

We now proceed to show that the metric space  $\left( \mathbb{D}^{2},D\right)  $ is geodesically complete, that is, every geodesic segment extends uniquely to a geodesic line of infinite length.

If $\xi$ a point on the boundary, 
$d_{\theta }\left( O,\xi \right) $ can be defined via projections as before and it is a positive real for all $\theta,$
except for a single value in $\left[ 0,\pi \right) .$ Thus, the integral $%
\int_{0}^{\pi}d_{\theta}\left( O, \xi \right) d\theta$ makes sense and we have

\begin{lemma}\label{rayINF}
If $O$ is the center of the unit disk and $\xi$ a point on the boundary, the
integral $\displaystyle\int_{0}^{\pi}d_{\theta}\left( O,\xi\right) d\theta$
is not bounded.
\end{lemma}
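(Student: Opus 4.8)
The plan is to reduce the statement to a single–variable integral and then isolate the singular behaviour of the integrand. First, using the rotational symmetry of $D$ from Proposition \ref{rotary}(a), I would assume without loss of generality that $\xi$ is the boundary point $(1,0)$ on the positive $x$–axis. For a direction $\theta\in[0,\pi)$ the diameter $\Delta_{\theta}$ is the line through $O$ making angle $\theta$ with the $x$–axis, and the orthogonal projection of $\xi=(1,0)$ onto $\Delta_{\theta}$ is the point at signed Euclidean distance $\cos\theta$ from $O$; identifying $\Delta_{\theta}$ with $(-1,1)$ (the orientation being irrelevant, just as in the proof of Lemma \ref{metric}) this says $\xi_{\theta}=\cos\theta$, while $O_{\theta}=0$. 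Hence, for every $\theta\neq 0$,
\[
d_{\theta}(O,\xi)=d_{I}(0,\cos\theta)=\left|\frac{\cos\theta}{1-|\cos\theta|}\right|=\frac{|\cos\theta|}{1-|\cos\theta|},
\]
and the unique exceptional value of $\theta$ in $[0,\pi)$ at which $\xi_{\theta}$ lands on the boundary is $\theta=0$, consistent with the remark preceding the lemma.

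Second, I would estimate this integrand near $\theta=0$, where $|\cos\theta|\to 1$ and $d_{\theta}(O,\xi)$ blows up. Restricting to $\theta\in(0,\pi/3]$ we have $\cos\theta\ge\tfrac12>0$, and from the elementary inequality $1-\cos\theta\le \theta^{2}/2$ we get
\[
d_{\theta}(O,\xi)=\frac{\cos\theta}{1-\cos\theta}\ \ge\ \frac{2\cos\theta}{\theta^{2}}\ \ge\ \frac{1}{\theta^{2}}.
\]
Since $d_{\theta}(O,\xi)\ge 0$ for all $\theta\in[0,\pi)$, this yields
\[
\int_{0}^{\pi}d_{\theta}(O,\xi)\,d\theta\ \ge\ \int_{0}^{\pi/3}\frac{d\theta}{\theta^{2}}=+\infty,
\]
which proves that the integral is not bounded.

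There is essentially no serious obstacle here: the only points that need care are the identification of the projection $\xi_{\theta}$ with the real number $\cos\theta$, and the observation that although the integrand is undefined at the single point $\theta=0$, it is nonnegative and measurable on $[0,\pi)$, so the divergence of $\int_{0}^{\pi/3}\theta^{-2}\,d\theta$ already forces the divergence of the whole integral. One could equally extract the divergence near $\theta=\pi$ via the substitution $\theta\mapsto\pi-\theta$, but a single endpoint suffices, and the reduction to $\xi=(1,0)$ via Proposition \ref{rotary} makes the computation of $\xi_{\theta}$ completely explicit.
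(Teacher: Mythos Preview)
Your proof is correct and follows essentially the same approach as the paper: both compute $d_{\theta}(O,\xi)=\dfrac{|\cos(\theta-\theta_\xi)|}{1-|\cos(\theta-\theta_\xi)|}$ and then compare the integrand near the singularity to a divergent elementary integral. The only cosmetic differences are that you normalize $\theta_\xi=0$ via Proposition~\ref{rotary} at the outset (the paper instead shifts the integration variable), and you compare against $1/\theta^{2}$ rather than the paper's coarser $1/\theta$.
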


\begin{proof}
\mbox{$\,$}\\[2mm]  \begin{minipage}{9.3cm}
As above, the point $O$ in the diameter $\Delta_{\theta}$ is identified with  $0\in (-1,1)$ and, if $\theta_{\xi}$ is the angle determined by 
$\xi ,$ for all $\theta\neq \theta_{\xi}$   we have 
\begin{align*}
d_{\theta}\left(  O,\xi\right)
&=  
d_I\left( 0, \|O\xi \| \cos \left( \theta-\theta_{\xi} \right) \right) 
= \left| f \left( \cos \left( \theta-\theta_{\xi} \right) \right)\right|
\\
& =\left|\frac{\cos\left( \theta-\theta_{\xi} \right)}{1-\left| \cos\left( \theta-\theta_{\xi} \right) \right|}\right|
=\frac{1}{1-\left| \cos\left( \theta-\theta_{\xi} \right) \right|} -1 .
\end{align*} 
\end{minipage}\hfill
  \begin{minipage}{4.3cm} 
\includegraphics[scale=0.85]{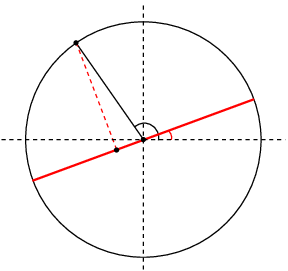} 
  \begin{picture}(0,0)   
  \put(-94,97){$\xi$}    
  \put(-12,58){$\color{red}\scriptstyle \Delta_{\theta}$}  
  \put(-45,56){$\color{red} _{\theta}$} 
  \put(-79,39){$\color{red} \xi_{\theta}$}   
    \put(-61,46){$\scriptstyle  O$} 
  \put(-62,61){$ \theta _{\xi}$} 
 \end{picture}
  \end{minipage}\\[3mm] 
Then the integral $\int_{0}^{\pi}d_{\theta}\left(  O,\xi\right)  d\theta$ equals

\begin{align*}\hskip-4mm\frac{1}{2}
	 \int_{0}^{2\pi}& \left[\frac{1}{1- \left| \cos \left( \theta-\theta_{\xi} \right)  \right| } -1\right] d\theta
	= \frac{1}{2}	 \int_{0}^{2\pi} \left[\frac{1}{1- \left| \cos \theta   \right| } -1\right] d\theta = \\
	&\hskip22mm = \frac{1}{2}	4 \int_{0}^{\pi /2} \left[\frac{1}{1- \cos  \theta  } -1\right] d\theta 
  > 2 
 \int_{0}^{\pi/2} \left[\frac{1}{ \theta } -1\right] d\theta = \infty.
\end{align*}

\end{proof}The above Lemma permits us to say that the $D-$length of a
Euclidean ray (or, diameter) is infinite. In a similar manner, 
if $\xi , \eta$ are two points on the boundary, $d_{\theta }\left( \xi, \eta \right) $ is a positive real for all $\theta,$
except for two values of $\theta$ in $\left[ 0,\pi \right) .$ Thus, the integral $
\int_{0}^{\pi}d_{\theta}\left( \xi,\eta \right) d\theta$ makes sense and we have
\begin{lemma}\label{lineINF}
If $\xi, \eta$ are two points on the boundary, the
integral $\displaystyle\int_{0}^{\pi}d_{\theta}\left(\xi,\eta\right) d\theta$ is not bounded.
\end{lemma}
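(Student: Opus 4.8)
The plan is to reduce the statement to the one--boundary--point estimate of Lemma~\ref{rayINF}. Assume $\xi\neq\eta$ (otherwise the integrand vanishes identically and there is nothing to prove) and let $m$ be the Euclidean midpoint of the chord $[\xi,\eta]$; since the chord is nondegenerate, $m$ lies in the open disk, so $\|Om\|<1$. Because $m\in[\xi,\eta]$, orthogonal projection being affine, for every direction $\theta$ the point $m_\theta$ lies between $\xi_\theta$ and $\eta_\theta$ on $\Delta_\theta$, so Property~P2 gives $d_\theta(\xi,\eta)=d_\theta(\xi,m)+d_\theta(m,\eta)\ge d_\theta(\xi,m)$ at every $\theta$ for which these quantities are finite, i.e.\ for all but the one or two exceptional values. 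Hence
\[
\int_0^\pi d_\theta(\xi,\eta)\,d\theta\;\ge\;\int_0^\pi d_\theta(\xi,m)\,d\theta ,
\]
so it suffices to prove the lemma for one boundary point $\xi$ and one interior point $m$, namely that the right--hand side is infinite.

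For this I would write $d_\theta(\xi,m)$ explicitly as in the proof of Lemma~\ref{rayINF}. If $\theta_\xi$ is the angle determined by $\xi$, then $\xi_\theta$ is identified with $\cos(\theta-\theta_\xi)$, while $m_\theta$ is identified with $\|Om\|\cos(\theta-\theta_m)$, so that $|m_\theta|\le\|Om\|<1$ and consequently $|f(m_\theta)|\le f(\|Om\|)=:M<\infty$ uniformly in $\theta$. Therefore
\[
d_\theta(\xi,m)=\bigl|\,f(\cos(\theta-\theta_\xi))-f(m_\theta)\,\bigr|\;\ge\;\bigl|f(\cos(\theta-\theta_\xi))\bigr|-M\;=\;\frac{1}{1-|\cos(\theta-\theta_\xi)|}-1-M .
\]
By Proposition~\ref{rotary}(a) we may assume after a rotation that $\theta_\xi=\pi/2$, so the interval $(\theta_\xi-\varepsilon,\theta_\xi+\varepsilon)$ sits inside $[0,\pi]$ for small $\varepsilon>0$.

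Finally I would restrict the integral to this interval and reuse the elementary comparison from Lemma~\ref{rayINF}: for $0<|\theta-\theta_\xi|<\pi/2$ one has $1-|\cos(\theta-\theta_\xi)|<|\theta-\theta_\xi|$, whence
\[
\int_0^\pi d_\theta(\xi,m)\,d\theta\;\ge\;\int_{\theta_\xi-\varepsilon}^{\theta_\xi+\varepsilon}\!\left(\frac{1}{|\theta-\theta_\xi|}-1-M\right)d\theta\;=\;2\int_0^{\varepsilon}\!\left(\frac{1}{\phi}-1-M\right)d\phi\;=\;\infty ,
\]
since $\int_0^\varepsilon\phi^{-1}\,d\phi$ diverges while the constant terms contribute only a finite amount; combined with the first displayed inequality this gives $\int_0^\pi d_\theta(\xi,\eta)\,d\theta=\infty$. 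The argument is thus essentially a corollary of Lemma~\ref{rayINF}; the only points needing (routine) care are that $m$ is genuinely interior and that the finitely many directions where a projection meets the boundary or where $\Delta_\theta\perp[\xi,\eta]$ form a null set and hence do not affect any integral. There is no serious obstacle — the whole content sits in the non--integrable singularity of $\theta\mapsto(1-|\cos(\theta-\theta_\xi)|)^{-1}$ at $\theta=\theta_\xi$, exactly as in the preceding lemma.
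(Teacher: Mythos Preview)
Your proof is correct and follows essentially the same route as the paper's: both pick the midpoint of the chord (the paper calls it $A$, obtained as the foot of the perpendicular diameter), use Property~P2 to reduce to $\int_0^\pi d_\theta(\xi,m)\,d\theta$, and then bound this below by $\int_0^\pi d_\theta(O,\xi)\,d\theta$ minus a finite quantity, invoking Lemma~\ref{rayINF}. The only cosmetic differences are that you bound $|f(m_\theta)|\le M$ pointwise (the paper instead integrates $d_\theta(O,A)$ to get $D(O,A)<\infty$) and that you re-derive the divergent comparison $1/(1-|\cos\phi|)>1/|\phi|$ rather than simply citing the previous lemma; neither changes the substance.
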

\begin{proof}
By Proposition \ref{rotary}, we may assume that the geodesic line determined by $\xi,\eta$ is perpendicular to the $x-$axis with intersection point, say, $A.$ It suffices to show that  $\displaystyle\int_{0}^{\pi}d_{\theta}\left(A, \xi\right) d\theta$ is not bounded.

Let $\theta_{\xi}$ be the angle formed by the $x-$axis and the geodesic ray joining $O$ with $\xi  .$ For all $\theta \in \left[ 0,\pi/2\right]\setminus \left\{ \theta_{\xi}\right\}$ we have 
\[ d_{\theta}\left( O,\xi \right) =d_{\theta}\left( O, A \right) +d_{\theta}\left( A,\xi \right) \] 
and for all $\theta \in \left[ 0,\pi/2\right]$ we have
\[ d_{\theta}\left( O,\xi \right)< d_{\theta}\left( A,\xi \right) < d_{\theta}\left( O, A \right) +d_{\theta}\left( A,\xi \right). \]
Therefore, the triangle inequality 
\[  d_{\theta}\left( O,\xi \right) \leq d_{\theta}\left( O, A \right) +d_{\theta}\left( A,\xi \right)  \] holds for all $\theta \in \left[ 0,\right]\setminus \left\{ \theta_{\xi}\right\}.$
By Lemma \ref{rayINF},  $\int_{0}^{\pi}d_{\theta}\left( O, \xi\right) d\theta$ is not bounded and $\int_{0}^{\pi}d_{\theta}\left( O, A\right) d\theta = D(O,A)$ is a positive real, thus, 
\begin{equation*}
 \displaystyle\int_{0}^{\pi}d_{\theta}\left(A, \xi\right) d\theta 
 \end{equation*}
cannot be bounded.
\end{proof}
\begin{remark}
	There exists a large family of metrics making the open unit disk a geodesic metric space satisfying Propositions \ref{basic}, \ref{rotary} and Lemmata \ref{rayINF}, \ref{lineINF}. In fact, for every strictly increasing function 
	$g: (-1,1)\rightarrow \mathbb{R}$ which satisfies 
	\[ \lim_{t\rightarrow 1} \int_{0}^{2\pi} \left| g\left( t \cos\theta  \right) \right|  d\theta =\infty\]
	we may apply the above construction using $g$ instead of $f$ and obtain a metric on the open unit disk with the above mentioned properties.
\end{remark} 
\section{Further properties of $\left( \mathbb{D}^{2},D\right) $}
For any $\kappa >0,$ let $\mathbb{H}_{-\kappa ^{2}}^{2}$ denote the standard hyperbolic model of constant negative curvature on the open unit disk with distance function $d_{\kappa }.$ For a convex domain $U$ in $\mathbb{R}^{2}$ denote by $d_{\mathcal{H}}$ the Hilbert metric for which we refer the reader to \cite[Ch.5, Section 6]{Pap}.

\begin{theorem}\label{eight}
For all $\kappa >0,$ the metric spaces $\left( \mathbb{D}^{2},D\right) $ and 
$\left( \mathbb{H}_{-\kappa ^{2}}^{2},d_{\kappa }\right) $ are not isometric. Moreover, for any convex domain $U$ in $\mathbb{R}^{2}$ equipped with the Hilbert metric $d_{\mathcal{H}}$  the metric spaces $\left( \mathbb{D}^{2},D\right) $ and 
$\left( U,d_{\mathcal{H}}\right) $ are not isometric.
\end{theorem}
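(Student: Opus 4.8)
The plan is to show that an isometry carrying $(\mathbb{D}^{2},D)$ onto either target must be a projective transformation of the disc, and then to reach a contradiction by computing $D(O,x)$ exactly. First I would record two facts. (i) The space $(\mathbb{D}^{2},D)$ is uniquely geodesic: by Proposition~\ref{basic} together with part~(b) of Lemma~\ref{unique}, the only metric segment joining $x$ to $y$ is the Euclidean segment $[x,y]$. (ii) By Proposition~\ref{rotary}(a) the value $D(O,x)$ depends only on $r=\|Ox\|$, and, projecting exactly as in the proof of Lemma~\ref{rayINF},
\[
D(O,x)=\rho(r):=2\int_{0}^{\pi/2}\Bigl(\tfrac{1}{1-r\cos\theta}-1\Bigr)\,d\theta .
\]
The function $\rho$ is strictly increasing with $\rho(0)=0$, and, estimating $1-r\cos\theta\le (1-r)+\theta^{2}/2$ near $\theta=0$ as in Lemma~\ref{rayINF}, one gets $\rho(r)\ge \frac{2}{\sqrt{1-r}}-\pi$ for $r$ close to $1$; thus $\rho(r)$ blows up of order $(1-r)^{-1/2}$ as $r\to 1^{-}$.

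For the hyperbolic statement I would work with the Beltrami--Klein model of $\mathbb{H}^{2}_{-\kappa^{2}}$ on $\mathbb{D}^{2}$, which is isometric to $(\mathbb{H}^{2}_{-\kappa^{2}},d_{\kappa})$ and in which the geodesics are the open Euclidean chords and $d_{\kappa}(O,x)=\tfrac{1}{\kappa}\operatorname{artanh}\|Ox\|$. Suppose $\Phi\colon(\mathbb{D}^{2},D)\to(\mathbb{H}^{2}_{-\kappa^{2}},d_{\kappa})$ is an isometry; composing with a hyperbolic isometry (using transitivity of $\mathrm{Isom}(\mathbb{H}^{2}_{-\kappa^{2}})$ — the step that lets one exploit the fact that $D$ is rotationally symmetric about the \emph{single} point $O$) we may assume $\Phi(O)=O$. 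Since $\Phi$ carries $D$-geodesics to $d_{\kappa}$-geodesics and in both structures these are exactly the straight segments, betweenness is preserved (Lemma~\ref{unique}(a) and the analogous fact in the Klein model), so $\Phi$ and $\Phi^{-1}$ preserve collinearity; by the fundamental theorem of projective geometry $\Phi$ is the restriction of a projective transformation of $\mathbb{RP}^{2}$ preserving $\mathbb{D}^{2}$. A projective automorphism of the disc fixing its centre is a Euclidean rotation or reflection about $O$, hence $\|O\,\Phi(x)\|=\|Ox\|$ for all $x$, and therefore $\rho(r)=D(O,x)=d_{\kappa}(O,\Phi(x))=\tfrac1\kappa\operatorname{artanh}\|O\,\Phi(x)\|=\tfrac{1}{\kappa}\operatorname{artanh}(r)$ for every $r\in[0,1)$. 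This is impossible: as $r\to 1^{-}$ the ratio $\rho(r)/\operatorname{artanh}(r)$ tends to $\infty$ (polynomial versus logarithmic blow-up), while $\tfrac1\kappa\operatorname{artanh}(r)/\operatorname{artanh}(r)\equiv\tfrac1\kappa$.

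For the Hilbert statement, let $\Phi\colon(\mathbb{D}^{2},D)\to(U,d_{\mathcal H})$ be an isometry with $U\subset\mathbb{R}^{2}$ a bounded convex domain. Unique geodesicity is an isometry invariant, so $(U,d_{\mathcal H})$ is uniquely geodesic; by the standard characterization this forces $U$ to be strictly convex, and then the $d_{\mathcal H}$-geodesics are precisely the Euclidean segments of $U$. Arguing exactly as above, $\Phi$ preserves collinearity and hence extends to a projective transformation $T$ with $T(\mathbb{D}^{2})=U$. Since $U$ is bounded, the line $T^{-1}(\ell_{\infty})$ misses $\overline{\mathbb{D}^{2}}$, so $T$ takes the circle $\partial\mathbb{D}^{2}$ to a bounded conic; being the boundary of a bounded convex set, this conic is an ellipse, i.e.\ $U$ is an ellipse. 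But the Hilbert metric of an ellipse is projectively equivalent to the Beltrami--Klein disc, so $(U,d_{\mathcal H})$ is isometric to $(\mathbb{H}^{2}_{-\kappa_{0}^{2}},d_{\kappa_{0}})$ for some $\kappa_{0}>0$, contradicting the hyperbolic statement just proved.

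The only genuinely computational point is the growth rate of $D(O,x)=\rho(r)$ as $r\to 1$: unlike in Lemma~\ref{rayINF}, where mere divergence suffices, here one needs that $\rho$ blows up strictly faster than $\log\frac{1}{1-r}$, which the elementary bound $\rho(r)\ge \frac{2}{\sqrt{1-r}}-\pi$ supplies. Everything else is structural, and the two places that require care are: the reduction to $\Phi(O)=O$ in the hyperbolic case (which is what makes the rotational symmetry of $D$ about $O$ usable), and first eliminating non-strictly-convex $U$ in the Hilbert case so that geodesics really are the straight segments and the projective-geometry theorem applies.
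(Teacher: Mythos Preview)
Your argument is correct and takes a genuinely different route from the paper's. For the hyperbolic part the paper normalises the isometry to fix $O$ and the coordinate axes, then compares explicit numerical values of the heights of two specific right-angle isosceles triangles ($T_{1}$ and $T_{\sqrt{2}/2}$) in the two metrics; you instead invoke the local form of the fundamental theorem of projective geometry to force $\Phi\in O(2)$, and then derive a contradiction from the asymptotics $\rho(r)\asymp (1-r)^{-1/2}$ versus $\operatorname{artanh}(r)\asymp\log\frac{1}{1-r}$. Your approach is cleaner and dispatches all $\kappa$ simultaneously, at the price of importing the local FTPG for collineations between open convex domains --- a true but not entirely textbook statement that would merit a precise reference. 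For the Hilbert part the paper cites the Busemann--Kelly theorem (reflections in all lines through one point force $U$ to be an ellipse), while you again use the FTPG together with ``projective image of a circle is a conic''; both routes then reduce to the hyperbolic case. Two minor points to tidy: the implication ``uniquely geodesic $\Rightarrow$ strictly convex'' is not quite accurate (one flat boundary segment still gives unique geodesicity), but you do not need it --- unique geodesicity alone already forces every $d_{\mathcal H}$-geodesic to be a Euclidean segment, which is all your collinearity argument uses; and the boundedness of $U$ is WLOG since a proper convex domain can be made bounded by a projective change, under which the Hilbert metric is invariant.
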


\begin{proof} Assume $F_{\kappa}:\mathbb{D}^{2}\longrightarrow \mathbb{H}_{-\kappa
^{2}}^{2}$ is such an isometry which, by homogeneity of $\mathbb{H}_{-\kappa
^{2}}^{2},$ we may assume that $F_{\kappa}$ preserves the center $O$ of the disk. Moreover, as the image of the $x-$axis under  $F_{\kappa}$ is a line in  $\mathbb{H}_{-\kappa^{2}}^{2}$ containing the origin, by composing  $ F_{\kappa}$  with a rotation in $\mathbb{H}_{-\kappa^{2}}^{2},$ we may assume that $F_{\kappa}$ preserves the $x-$axis. We next show that $F_{\kappa}$ necessarily preserves the $y-$axis as well, by showing that the image of the  $y-$axis under $F_{\kappa}$ is a line perpendicular to the $x-$axis. To see this, for any $%
x\in \left( 0,1\right) $ consider the quadrilateral $XYZW$ where $X=(x,0),$ $
Y=(0,x),$ $Z=\left( -x,0\right) $ and $W=\left( 0,-x\right) .$ Clearly, $XYZW$ is a square with respect to  the metric $D$ and, hence, so must be its image under $F_{\kappa}.$ The
geodesic segments $\left[ F_{\kappa}(X),F_{\kappa}\left( Z\right) \right] $ and $\left[
F_{\kappa}(Y),F_{\kappa}\left( W\right) \right] $ intersect at $O=F_{\kappa}(O)$ which is the midpoint for both segments. Moreover, these segments must form a right angle at $O$, otherwise the quadrilateral $F_{\kappa}(X)F_{\kappa}\left( Y\right) F_{\kappa}(Z)F_{\kappa}\left(W\right) $ cannot be a square.

Define the function $h:\left[ 0,\infty \right] \longrightarrow \mathbb{R}$
where $h(b),$ for $b\in \left[ 0,\infty \right) ,$  is the $d_{1}-$length of
the height of the right angle hyperbolic triangle in $\mathbb{H}_{-1}^{2}$
with side lengths equal to $b.$ For $b=\infty ,$ $h\left( \infty \right) $
is the $d_{1}-$length of the height of the right angle ideal hyperbolic
triangle in $\mathbb{H}_{-1}^{2}$ with vertices $O,(1,0)$ and $(0,1).$ By
elementary calculations, $h\left( \infty \right) $ is the $d_{1}-$length of
the segment with endpoints $O$ and $\left( 1-\sqrt{2}/2,1-\sqrt{2}/2\right) .
$ Thus its Euclidean length is $\sqrt{2}-1$ and 
\begin{equation} 
h\left( \infty \right) =\log \frac{1+\left( \sqrt{2}-1\right) }{1-\left( 
\sqrt{2}-1\right) }=\log \left( 1+\sqrt{2}\right) \approx 0.8813735870
\label{hofinfty}
\end{equation}%
For $b\in \left[ 0,\infty \right) $ let $B$ be the point on the positive $x-$axis so that $OB$ has hyperbolic length $b$ and denote by $C$ the trace from the origin of the height of the right angle hyperbolic triangle in $\mathbb{H}_{-1}^{2}$
with side lengths equal to $b.$ Then the triangle $\triangle (OCB)$ has a right angle at $C$ and $\widehat{COB}=\pi/4.$ Using the formula
\[\cos \frac{\pi }{4}=\frac{\tanh \left( h\left( b\right) \right) }{\tanh \left( b\right) }\]
we find
\begin{equation}
h\left( b\right) =\frac{1}{2}\log \frac{1+(\sqrt{2}/2)\tanh b}{1-(\sqrt{2}%
/2)\tanh b}  \label{hofb}
\end{equation}
%%%%%%%%%%%%%%%%%%%%%%%%%%%%%%%%%%%%%%%%%%%%%%%%%%
%%%%%%%%%%%%%%%%%%%%%%%%%%%%%%%%%%%%%%%%%%%%%%%%%%
%We will need in the sequel the following arithmetic approximation of $h\left( h\left( \infty \right) \right) $%
%\begin{equation}h\left( h\left( \infty \right) \right) =\frac{1}{2}\log \frac{1+(\sqrt{2}/2)\tanh \left( \log \left( 1+\sqrt{2}\right) \right) }{1-(\sqrt{2}/2)\tanh\left( \log \left( 1+\sqrt{2}\right) \right) }\approx 0.549306144
%\label{hofinfapprx}
%\end{equation}%
%%%%%%%%%%%%%%%%%%%%%%%%%%%%%%%%%%%%%%%%%%%%%%%%%
We next define an analogous function $h^{D}$ for the metric space $\left( 
\mathbb{D}^{2},D\right) $ with a different domain%
\begin{equation*}
h^{D}:\left[ 0,1\right] \longrightarrow \mathbb{R}
\end{equation*}%
where for $x\in \left[ 0,1 \right) ,$ $h^{D}(x)$ is the $D-$length of
the height of the right angle geodesic triangle $T_{x}$ with vertices $%
O,(x,0)$ and $\left( 0,x\right) .$ For $x=1,$ $h^{D}\left( 1\right) $ is the length of the 
height  of the corresponding ideal geodesic triangle. As all rotations
of the unit disk are isometries of $\left( \mathbb{D}^{2},D\right) $ we have 
\begin{equation}
h^{D}\left( x\right) =D\left( O,\frac{\sqrt{2}}{2}x\right) \mathrm{\ \ for\
all\ }x\in \lbrack 0,1].  \label{hDx}
\end{equation}
We will explicitly compute the $D-$lengths of the heights of the triangles $T_{\sqrt{2}/2}$ and $T_{1}$
and compare them with the corresponding $d_{k}-$lengths of the heights of the triangles $F_{\kappa}\left( T_{\sqrt{2}/2}\right) $ and $F_{\kappa}\left( T_{1}\right) .$ The comparison of the lengths of the heights of $T_{1}$ and $F_{\kappa}\left( T_{1}\right) $ will suffice to reach a contradiction for the case $\kappa =1.$ The triangles  $T_{\sqrt{2}/2}$ and $F_{\kappa}\left( T_{\sqrt{2}/2}\right) $ are deployed in order to reach a contradiction for all $\kappa .$

We first compute the  $D-$lengths of the heights of the triangles $T_{\sqrt{2}/2}$ and $T_{1} .$\\
For the triangle $T_{\sqrt{2}/2},$ by (\ref{hDx}) and using the easily verified fact that the
derivative with respect to  $\theta $ of the function $\frac{4\tan^{-1}\left( \frac{1}{\sqrt{3}}\tan\left( \frac{\theta}{2} \right)\right)}{\sqrt{3}} $ is $\frac{2}{2-\cos x },$ we have

\begin{align}
h^{D}\left( \frac{\sqrt{2}}{2}\right) & =D\left( O,\frac{\sqrt{2}}{2} \,\, \frac{\sqrt{2}}{2} \right)
= D\left( O,\frac{1}{2}\right)
=\int_{0}^{\pi }\left[ \frac{1}{1-\frac{1}{2}\left\vert \cos \theta
\right\vert }-1\right] d\theta  \notag  \\
& =\int_{0}^{\pi /2}\left[ \frac{1}{1-\frac{1}{2}\cos \theta }-1\right]
d\theta +\int_{\pi /2}^{\pi }\left[ \frac{1}{1+\frac{1}{2}\cos \theta }-1%
\right] d\theta   \notag \\
& =2 \left[ \frac{4\tan ^{-1}\left( \sqrt{3}\tan \left( \frac{\theta }{2}%
\right) \right) }{\sqrt{3}}-\theta \right] _{\theta =0}^{\theta =\pi /2}
\notag \\
& =2\left[ \frac{8\sqrt{3}-9}{18}\pi \right]  =\frac{8\sqrt{3}-9}{9}\pi \hskip0.2cm\approx 1.695205651 
\label{hDr2line1}
\end{align}%
A similar computation, using again the easily verified fact that the
derivative with respect to  $\theta $ of the function $2\sqrt{2}\tan ^{-1}\left( \frac{%
\left( \sqrt{2}+2\right) \tan \left( \frac{\theta }{2}\right) }{\sqrt{2}}%
\right) $ is $\frac{2+\sqrt{2}}{-(1+\sqrt{2})\cos \theta +\sqrt{2}+2},$
shows 
\begin{equation}
h^{D}\left( 1\right) =D\left( O,\frac{\sqrt{2}}{2}\right) =\frac{3\sqrt{2}-2}{2}\pi \approx 3.522731754  \label{hD1}
\end{equation}
The above calculation along with (\ref{hofinfty}) shows that
\begin{equation}
h^{D}\left( 1\right) =\frac{3\sqrt{2}-2}{2}\pi  \neq \log \left( 1+\sqrt{2}\right)= h\left( \infty \right)
\label{refsugg}
\end{equation}
and, thus, the triangles $T_{1}$ and $F_{1}\left( T_{1}\right) $ cannot be isometric. It follows that $F_{\kappa}$ cannot be an isometry in the case $\kappa =1.$

Before proceeding with the general case, we compute the $d_{1}-$length of the height of the  triangle $F\left(T_{\sqrt{2}/2}\right) .$ This triangle, being isometric to $T_{\sqrt{2}/2},$ has side lengths $D\left( O,\frac{\sqrt{2}}{2}\right) $ and, using (\ref{hofb}), its height has $d_{1}-$length
\begin{align}
h\left( D\left( O,\frac{\sqrt{2}}{2}\right) \right)
&
 =h\left( \frac{3\sqrt{2}-2}{2}\pi \right) \nonumber\\
& =\frac{1}{2}\log \frac{1+(\sqrt{2}/2)\tanh \left( \frac{3
\sqrt{2}-2}{2}\pi \right) }{1-(\sqrt{2}/2)\tanh \left( \frac{3\sqrt{2}-2}{2}
\pi \right) }\approx 0.8789154496  \label{lastH}
\end{align}

We proceed now with the general case. Recall that geodesic lines in $\mathbb{H}_{-\kappa
^{2}}^{2}$ and $\mathbb{H}_{-1}^{2}$ coincide as subsets of $\mathbb{D}^{2}$
and lengths are multiplied by $\kappa .$ Therefore,
$F_{\kappa} \left( T_{1}\right)  = F_{1} \left( T_{1}\right) $ and
the $d_{\kappa}-$length of the height of the triangle $F_{\kappa} \left( T_{1}\right) $ is equal to $\kappa \,h\left( \infty \right). $ By (\ref{refsugg}), it follows that $F_{\kappa}$ can be an isometry only for the model $\mathbb{H}_{-\kappa^{2}_0}^{2}$ where
\begin{equation}
\kappa_0 =\frac{h^{D}\left( 1\right)}{h\left( \infty \right)}
 =\frac{3\sqrt{2}-2}{2 \log \left( 1+\sqrt{2}\right) }\pi >3.
\label{refsugg2}
\end{equation}
To rule out this last case we compare the $D-$length of the height of the triangle $T_{\sqrt{2}/2}$ (computed in (\ref{hDr2line1}) above) with the $d_{\kappa_0}-$length of the height of the  triangle
$F_{\kappa_0}\left( T_{\sqrt{2}/2}\right) .$ As before, the triangles $F_{1}\left( T_{\sqrt{2}/2}\right) $ and $F_{\kappa_0}\left( T_{\sqrt{2}/2}\right) $ coincide as sets and the $d_{\kappa_0}-$length of its height is its $d_{1}-$length (computed in (\ref{lastH}) above) multiplied by $\kappa_0 .$ Therefore, if the triangles  $T_{\sqrt{2}/2}$ and  $F_{\kappa_0}\left( T_{\sqrt{2}/2}\right) $ were isometric,
%by (\ref{hDr2line1}) and (\ref{lastH}),
$\kappa_0$ would have to  satisfy
\[ h^{D}\left( \frac{\sqrt{2}}{2}\right) = \kappa_0 \,\,  h\left( D\left( O,\frac{\sqrt{2}}{2}\right) \right)\]
which is impossible because $\kappa_0 >3$ (see (\ref{refsugg2})) and the ratio of
$h^{D}\left( \frac{\sqrt{2}}{2}\right) $ and $h\left( D\left( O,\frac{\sqrt{2}}{2}\right) \right)$ is, by (\ref{hDr2line1}) and (\ref{lastH}), equal to
\[ \frac
{h^{D}\left( \frac{\sqrt{2}}{2}\right)}
{h\left( D\left( O,\frac{\sqrt{2}}{2}\right) \right)}=
\frac{\frac{8\sqrt{3}-9}{9}\pi }{h\left( \frac{3\sqrt{2}-2}{2}\pi \right)}
 \approx \frac{1.695205651 }{0.8789154496} <3.
\]
This completes the proof for arbitrary curvature.

To see that $\left( \mathbb{D}^{2},D\right) $ is not isometric to any convex domain $U$ equipped with its Hilbert metric $d_{\mathcal{H}}$, we will use a result of Busemann and Kelly (see \cite[\S 29.2]{BusKel}) which states the following: let $U$ be a bounded open convex domain in $\mathbb{R}^2 .$ Reflections with respect to all lines in $U$ through one fixed point exist if and only if $U$ is the interior of an ellipse. 

The reflection assumption holds for the metric space $\left( \mathbb{D}^{2},D\right) ,$ see Proposition \ref{rotary}. If $\left( \mathbb{D}^{2},D\right) $ were isometric to some $\left( U,d_{\mathcal{H}}\right) $ then the same reflection assumption would hold for $U$ and, hence, $U$ would have to be the interior of an ellipse making $d_{\mathcal{H}}$ the hyperbolic metric. As shown above this cannot be the case.
\end{proof}
We next restrict our attention to geodesic rays, that is, isometric maps $[0,\infty) \rightarrow \mathbb{D}^2.$
\begin{definition}
Two geodesic rays $r_1 , r_2 : [0,\infty) \rightarrow \mathbb{D}^2$ in the geodesic metric space $\left( \mathbb{D}^{2},D\right) $  are called asymptotic if the distance function $t\rightarrow D(r_1(t),r_2 (t))$ is bounded.
\end{definition}

\begin{remark}Asymptoticity of geodesic rays may be seen as a
generalization to arbitrary metric spaces of parallelism of geodesic rays in Euclidean space. Moreover, equivalence classes of asymptotic geodesic rays are the tool to define the visual boundary of a geodesic metric space. It is well known, see for example \cite[Prop. 10.1.4]{Pap},  that two geodesic rays in a geodesic metric space are asymptotic if and
only if their images are at finite Hausdorff distance, a notion defined below.
\end{remark}

\begin{definition}\label{dHaus}
For two geodesic rays $r_{1}$ and $r_{2}$ in a geodesic metric space $(X,d)$, define
their Hausdorff distance by 
\begin{equation}
d_{H}\left( r_{1},r_{2}\right) =\max \left\{ \sup_{x\in \func{Im}%
r_{1}}d\left( x,\func{Im}r_{2}\right) ,\sup_{x\in \func{Im}r_{2}}d\left( x,%
\func{Im}r_{1}\right) \right\} \label{dHauss}
\end{equation}%
where the distance of a point $\alpha$ from a set $B$ is $d\left( \alpha,B\right)
=\inf_{\beta\in B}d\left( \alpha,\beta\right) .$
\end{definition}
As geodesics in $\left( \mathbb{D}^2, D\right)$ coincide with Euclidean lines, it is natural to examine whether the natural Euclidean boundary $\mathbb{S}^1$ of $\mathbb{D}^2$ coincides with the set of equivalence classes of asymptotic geodesic rays in $\left( \mathbb{D}^2, D\right). $ \\
We say that two geodesics rays $r_{1}$ and $r_{2}$ coincide at infinity if, as Euclidean lines, intersect the same point of $\mathbb{S}^1 \equiv\partial \mathbb{D}^2.$  
\begin{theorem}Let $r_{1}$ and $r_{2}$ be two geodesic rays in $\left( \mathbb{D}^2, D\right). $ Then $r_{1}$ and $r_{2}$ coincide at infinity if and only if they are asymptotic. 
\end{theorem}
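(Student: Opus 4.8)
The plan is to prove both implications separately, using the rotational and reflective symmetry of $(\mathbb{D}^2,D)$ together with the explicit formula $d_\theta(x,y)=|f(x_\theta)-f(y_\theta)|$ to control distances between projections.

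First I would treat the easier ``only if'' direction: assume $r_1$ and $r_2$ do \emph{not} coincide at infinity, i.e. as Euclidean lines they hit distinct boundary points $\xi_1\neq\xi_2\in\mathbb{S}^1$. I would then show the distance $D(r_1(t),r_2(t))$ is unbounded. The cleanest approach: by Proposition~\ref{rotary} we may normalize, e.g. place $\xi_1,\xi_2$ symmetrically about the $x$-axis. Since $r_1(t)\to\xi_1$ and $r_2(t)\to\xi_2$ in the Euclidean sense, for large $t$ both points are close to the boundary; I would compare $D(r_1(t),r_2(t))$ from below with the $D$-distance between two points near $\xi_1$ and $\xi_2$ respectively, and invoke (the proof technique of) Lemma~\ref{lineINF}: the integrand $d_\theta(r_1(t),r_2(t))$ develops a non-integrable-in-the-limit singularity near the two directions perpendicular to the chords, forcing the integral to blow up. More carefully, I would fix a small arc of directions $\theta$ bounded away from those two bad directions where $(r_1(t))_\theta$ and $(r_2(t))_\theta$ stay a definite distance apart inside $(-1,1)$ while at least one of them tends to $\pm1$; on that arc $d_\theta\to\infty$ pointwise, and Fatou gives unboundedness of the integral.

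For the ``if'' direction, assume $r_1,r_2$ coincide at infinity at a point $\xi$. Using Proposition~\ref{rotary} again I would normalize so that $\xi=(1,0)$ lies on the positive $x$-axis; then both rays are (eventually) contained in Euclidean chords terminating at $(1,0)$, hence are eventually chords parallel to... no --- rather, they are two chords through $(1,0)$. Parametrize $r_i(t)$ by $D$-arclength. The key estimate is to bound $D(r_1(t),r_2(t))$ uniformly in $t$. I would do this by choosing, for each $t$, the point $r_2(s(t))$ on $r_2$ lying on the same ``level'' as $r_1(t)$ in a suitable sense (e.g. same foot of perpendicular to $\Delta_0$, i.e. same $x$-coordinate $\approx 1$), and estimate $D(r_1(t),r_2(s(t)))$ directly via $\int_0^\pi d_\theta\,d\theta$. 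The point is that when $x$-coordinates of the two points are both equal to some $x$ close to $1$, their projections to $\Delta_\theta$ satisfy $|(r_1(t))_\theta - (r_2(t))_\theta|\le C(1-x)$ roughly (bounded transverse separation between the two fixed chords shrinks to zero as one approaches $(1,0)$), and a computation with the explicit $f$ shows $\int_0^\pi d_\theta\,d\theta$ stays bounded; one must be careful near $\theta=\pi/2$ where the projections are near $0$ and $f$ is well-behaved, and near $\theta=0$ where $f$ blows up but the two projections are nearly equal. Combining this with the fact that $D$-arclength along each chord is a genuine reparametrization (Lemma~\ref{rayINF}), and that moving along $r_2$ from $r_2(s(t))$ to $r_2(t)$ changes the distance to $r_1(t)$ by a controlled amount, gives a uniform bound, so $r_1$ and $r_2$ are asymptotic; alternatively invoke the cited fact (\cite[Prop.~10.1.4]{Pap}) that asymptoticity is equivalent to finite Hausdorff distance and bound that instead.

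The main obstacle I anticipate is the quantitative estimate in the ``if'' direction: showing $\sup_t D(r_1(t),r_2(t))<\infty$ requires a genuine integral estimate near the singular direction $\theta=0$, trading the blow-up of $f$ near $\pm 1$ against the smallness $1-x\to 0$ of the transverse gap between the two chords. One needs that $\int_0^\pi |f((r_1)_\theta)-f((r_2)_\theta)|\,d\theta$ is bounded \emph{independently} of how close to $(1,0)$ the points are, which amounts to an estimate of the form $\big|f(a)-f(b)\big|\le \dfrac{|a-b|}{(1-|a|)(1-|b|)}$ combined with the geometric fact that, along two chords meeting at $(1,0)$, the quantity $\dfrac{|a_\theta - b_\theta|}{(1-|a_\theta|)(1-|b_\theta|)}$ stays integrable in $\theta$ uniformly. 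The ``only if'' direction is comparatively routine once one isolates the right arc of directions and applies Fatou's lemma together with Lemma~\ref{lineINF}.
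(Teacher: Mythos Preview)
Your plan is correct and follows essentially the same route as the paper: the short direction is handled by the infinite-length Lemma~\ref{lineINF}, and for the main direction you normalize the common boundary point to $(1,0)$, compare points with equal $x$-coordinate on the two chords, and reduce the problem to a uniform integral estimate for $\int_0^\pi d_\theta\,d\theta$ near the singular direction $\theta=0$, trading the blow-up of $f$ against the $O(1-x)$ transverse gap. The paper carries out exactly this estimate (with an explicit auxiliary function $\Phi(\theta)=\sin\theta/(1-x\cos\theta-\lambda(1-x)\sin\theta)$ whose maximum is located and controlled), then passes to general pairs of rays via the triangle inequality for the Hausdorff distance; your bound $|f(a)-f(b)|\le |a-b|/((1-|a|)(1-|b|))$ is precisely the identity that underlies the paper's computation.
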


\begin{proof} The only if direction follows from Lemma \ref{lineINF}.
In view of the above Remark we will show that $r_{1}$ and $r_{2}$ coincide at infinity then their images are at finite Hausdorff distance. 

Since rotations around the origin $O$ are isometries (see Proposition \ref{rotary}) we may assume that the common point at infinity is $%
(1,0)\in \partial \mathbb{D}^{2}.$ We will first examine the case where one
of the geodesic rays is the positive $x-$axis and the other one is contained in the upper
half disk forming an angle $\omega \in \lbrack 0,\pi /2)$ with the $x-$axis
at $(1,0).$ Clearly, the Hausdorff distance is increasing with respect to $\omega .$ Thus, we may consider a geodesic ray being  contained entirely in
the first quadrant forming an angle $\omega \in \lbrack \pi /4,\pi /2)$ with
the $x-$axis at $(1,0).$ The general case will then follow easily.

Set $\lambda =\tan \omega $ and for any $x$ satisfying $0<x<1$ consider the
point $B=(x,0)$ on the (geodesic) $x-$axis and the point $A=\left( x,\lambda
(1-x)\right) $ on the other geodesic ray. Set $\theta _{x}$ to be the angle
formed by the segment $OA$ and the $x-$axis. Clearly the distance $D\left(
A,B\right) $ depends on $x$ and it suffices to show that 
\begin{equation}
\lim_{x\rightarrow 1}D\left( A,B\right) \mathrm{\ is\ bounded.}  \label{limB}
\end{equation}
Recall that for a direction $\Delta _{\theta },$ $\theta \in \left[ 0,\pi %
\right] $ and a point $X$ in the interior of the unit disk we denote by $%
X_{\theta }$ its projection on the diameter $\Delta _{\theta }.$ For the
points $A$ and $B$ we have%
\begin{equation*}
\left\Vert OA_{\theta }\right\Vert =\left\Vert OA\right\Vert \cos \left(
\theta -\theta _{x}\right) =x\frac{\cos \left( \theta -\theta _{x}\right) }{%
\cos \theta _{x}}\mathrm{\ \ and\ \ }\left\Vert OB_{\theta }\right\Vert
=x\cos \theta .
\end{equation*}%
As $A$ is contained in the first quadrant, for all $\theta \in \left[ 0,\frac{%
\pi }{2}\right] $ we obtain%
\begin{equation}
d_{\theta }\left( A,B\right) =d_{I}\left( A_{\theta },B_{\theta }\right) =%
\frac{1}{1-\left\Vert OA_{\theta }\right\Vert }-\frac{1}{1-\left\Vert
OB_{\theta }\right\Vert }=\frac{1}{1-x\frac{\cos \left( \theta -\theta
_{x}\right) }{\cos \theta _{x}}}-\frac{1}{1-x\cos \theta }.  \label{Iab}
\end{equation}%
In a similar manner we find%
\begin{equation*}
d_{\theta }\left( A,B\right) =%
\begin{cases}\displaystyle
\frac{1}{1-x\frac{\cos \left( \theta -\theta _{x}\right) }{\cos \theta _{x}}}%
+\frac{1}{1+x\cos \theta }, 
& \displaystyle
 \text{if }\theta \in \left[ \frac{\pi }{2},%
\frac{\pi }{2}+\theta _{x}\right] \\[8mm] \displaystyle
-\frac{1}{1+x\frac{\cos \left( \theta -\theta _{x}\right) }{\cos \theta _{x}}%
}+\frac{1}{1+x\cos \theta }, & \text{if }\theta \in \left[ \frac{\pi }{2}%
+\theta _{x},\pi \right]%
\end{cases}%
\end{equation*}%
In view of (\ref{limB}) we will only examine the limit as $x\rightarrow 1$
of the integral%
\begin{equation*}
\int_{0}^{\theta _{0}}d_{\theta }\left( A,B\right) d\theta =\int_{0}^{\theta
_{0}}\left[ \frac{1}{1-x\frac{\cos \left( \theta -\theta _{x}\right) }{\cos
\theta _{x}}}-\frac{1}{1-x\cos \theta }\right] d\theta
\end{equation*}%
for sufficiently small $\theta _{0}$ (to be chosen later) because all the
above expressions for $d_{\theta }\left( A,B\right) $ are continuous and
bounded on $\left[ \theta _{0},\pi -\theta _{0}\right] $ and the integral $%
\int_{\pi -\theta _{0}}^{\pi }d_{\theta }\left( A,B\right) d\theta $ is
treated similarly. By substituting%
\begin{equation*}
\sin \theta _{x}=\frac{\lambda \left( 1-x\right) }{\sqrt{x^{2}+\lambda^2\left(
1-x\right) ^{2}}}\mathrm{\ \ and \ \ }\cos \theta _{x}=\frac{x}{\sqrt{%
x^{2}+\lambda^2\left( 1-x\right) ^{2}}}
\end{equation*}%
in (\ref{Iab}) we obtain

\begin{align}
d_{\theta }\left( A,B\right) & =\frac{x\sin \theta _{x}\sin \theta }{\cos
\theta _{x}-x\cos \left( \theta -\theta _{x}\right) -x\cos \theta _{x}\cos
\theta +x^{2}\cos \left( \theta -\theta _{x}\right) \cos \theta }  \notag \\
& =\frac{\lambda \left( 1-x\right) \sin \theta }{1-x\cos \theta -\lambda
\left( 1-x\right) \sin \theta -x\cos \theta +x^{2}\cos ^{2}\theta +\lambda
x\left( 1-x\right) \cos \theta \sin \theta }  \notag  \\
& =\frac{\lambda \left( 1-x\right) \sin \theta }{\left( 1-x\cos \theta
\right) \left( 1-x\cos \theta -\lambda \left( 1-x\right) \sin \theta \right) 
}\label{triple}
\end{align}%
Define 
\begin{equation*}
\Phi \left( \theta \right) =\frac{\sin \theta }{1-x\cos \theta -\lambda
\left( 1-x\right) \sin \theta }
\end{equation*}%
and a straightforward calculation shows that 
\begin{equation*}
\Phi ^{\prime }\left( \theta \right) =\frac{-x+\cos \theta }{\left( 1-x\cos
\theta -\lambda \left( 1-x\right) \sin \theta \right) ^{2}}.
\end{equation*}%
Therefore, there exists a unique angle $\omega _{x}\in \left( \theta _{x},%
\frac{\pi }{2}\right) $ such that 
\begin{equation*}
\cos \omega _{x}=x\Longleftrightarrow \Phi ^{\prime }\left( \theta \right)
=0.
\end{equation*}%
Using the equalities $\cos \omega _{x}=x$ and $\sin \omega _{x}=\sqrt{1-x^{2}%
}$ it is easily shown that%
\begin{align}
\sqrt{1-x}\Phi \left( \omega _{x}\right) & =\frac{\sqrt{1-x}\sin \omega _{x}%
}{1-x\cos \omega _{x}-\lambda \left( 1-x\right) \sin \omega _{x}}=\frac{%
\sqrt{1-x}\sqrt{1-x^{2}}}{1-x^{2}-\lambda \left( 1-x\right) \sqrt{1-x^{2}}}  
\notag \\
& =\frac{\sqrt{1+x}\left( 1-x\right) }{\left( 1-x\right) \left( 1+x-\lambda 
\sqrt{1-x^{2}}\right) }\longrightarrow \frac{\sqrt{2}}{2}\mathrm{\ \ as\ \ }%
x\rightarrow 1.
\end{align}%
The choice of $x$ determines both $A$ and $B$ as well as $\theta _{x}$,
thus, we may choose $\theta _{0}$ such that 
\begin{equation}
\mathrm{for\ \ all\ \ }\theta \leq \theta _{0},\sqrt{1-x}\Phi \left( \omega
_{x}\right) \leq 2.  \label{u0}
\end{equation}%
As the quantity $\Phi \left( \theta \right) $ attains its maximum at $\theta
=\omega _{x}$ we have, using (\ref{triple}),
\begin{align}
\int_{0}^{\theta _{0}}d_{\theta }\left( A,B\right) d\theta &
=\int_{0}^{\theta _{0}}\frac{\lambda \left( 1-x\right) }{\left( 1-x\cos
\theta \right) }\Phi \left( \theta \right) d\theta \leq \int_{0}^{\theta
_{0}}\frac{\lambda \left( 1-x\right) }{\left( 1-x\cos \theta \right) }\Phi
\left( \omega _{x}\right) d\theta   \notag \\
& =\int_{0}^{\theta _{0}}\frac{\lambda \sqrt{1-x}}{\left( 1-x\cos \theta
\right) }\sqrt{1-x}\Phi \left( \omega _{x}\right) d\theta \leq
2\int_{0}^{\theta _{0}}\frac{\lambda \sqrt{1-x}}{\left( 1-x\cos \theta
\right) }d\theta 
\end{align} 
where the latter inequality follows from (\ref{u0}). It suffices to show
that $\int_{0}^{\theta _{0}}\frac{\sqrt{1-x}}{\left( 1-x\cos \theta \right) }%
d\theta $ is bounded which follows from the following identity 
\begin{equation*}
\int \frac{1}{1-x\cos \theta }d\theta =\frac{2}{\sqrt{1-x^{2}}}\tan
^{-1}\left( \sqrt{\frac{1+x}{1-x}}\tan \frac{\theta }{2}\right) 
\end{equation*}%
and the observation that the range of the inverse tangent function is a
bounded interval:
\begin{align}
\int_{0}^{\theta _{0}}\frac{\sqrt{1-x}}{\left( 1-x\cos \theta \right) }%
d\theta & =\frac{\sqrt{1-x}}{\sqrt{1-x^{2}}}\left[ \tan ^{-1}\left( \sqrt{%
\frac{1+x}{1-x}}\tan \frac{\theta }{2}\right) \right] _{\theta =0}^{\theta
=\theta _{0}}    \notag \\
&=\frac{1}{\sqrt{1+x}}\tan ^{-1}\left( \sqrt{\frac{1+x}{1-x}}%
\tan \frac{\theta _{0}}{2}\right) .   \notag
\end{align}
We now discuss the case of two arbitrary asymptotic geodesic rays $r_{1}$
and $r_{2}.$ As mentioned at the beginning of the proof, we may assume that
the  common boundary point is $(1,0)\in \partial \mathbb{D}^{2}.$ Let $%
\omega _{i}\in \left( -\pi /2,\pi /2\right) ,$ $i=1,2$ be the angle formed
by $r_{i}$ and the $x-$axis. Denote by $r_{x}$ the geodesic ray whose image is
the positive $x-$axis in the unit disk. If both $\omega _{1},\omega _{2}$ are
positive and, say, $\omega _{1}<\omega _{2}$ then $d_{H}\left(
r_{1},r_{2}\right) <d_{H}\left( r_{x},r_{2}\right) $. If $\omega _{1}\omega
_{2}<0$ then a triangle inequality argument asserts that $d_{H}\left(
r_{1},r_{2}\right) \leq d_{H}\left( r_{x},r_{1}\right) +d_{H}\left(
r_{x},r_{1}\right) .$ This completes the proof of the theorem. \end{proof}

\noindent {\bf Acknowledgments.} The authors would like to thank the anonymous referee for very helpful comments and suggestions which, among other things, improved the exposition of the proof of Theorem \ref{eight}.

\noindent Conflict of Interest statement: On behalf of all authors, the corresponding author states that there is no conflict of
interest.\\
\noindent Data Availability Statement: Data sharing not applicable to this article as no datasets were generated or analysed
during the current study.

\end{document}